\documentclass[a4paper,11pt,pdf]{amsart}
\usepackage{enumerate, amsmath, amsfonts, amssymb, amsthm, thmtools, wasysym, graphics, graphicx, xcolor, frcursive,comment,bbm}

\usepackage{etex}
\usepackage{lscape}
\usepackage{tikz-cd}

\makeatletter
\newcommand{\thickhline}{%
	\noalign {\ifnum 0=`}\fi \hrule height 1pt
	\futurelet \reserved@a \@xhline
}

\definecolor{darkblue}{rgb}{0.0,0,0.7} 

\definecolor{darkred}{rgb}{0.7,0,0} 
\usepackage{hyperref}
\usepackage[all]{xy}
\usepackage[T1]{fontenc}

\usepackage{vmargin}           
\setmarginsrb{2.8cm}{2.5cm}{2.8cm}{2.5cm}{0cm}{0.6cm}{0cm}{0cm}

\usepackage{caption,lipsum}
\captionsetup{width=0.93\textwidth}

\usepackage{graphicx}                  
\usepackage{pstricks,pst-plot,pst-text,pst-tree,pst-eps,pst-fill,pst-node,pst-math}
\usepackage{setspace}
\usepackage{multicol}

\newcommand{\darkred}{\color{darkred}} 
\newcommand{\defn}[1]{\emph{\darkred #1}}

%\phi_c}}
%\phi^*_c}}

\usepackage{etex}

\newtheorem{theorem}{Theorem}[section]
\newtheorem{prop}[theorem]{Proposition}
\newtheorem{lemma}[theorem]{Lemma}
\newtheorem{cor}[theorem]{Corollary}

\theoremstyle{definition}

\newtheorem{rmq}[theorem]{Remark}

\newtheorem{question}[theorem]{Question}

\numberwithin{equation}{section}

\author{Thomas Gobet}
\address{Institut Denis Poisson, CNRS UMR 7013, Faculté des Sciences et Techniques, Université de Tours, Parc de Grandmont, 
37200 TOURS, France}

\title[On maximal dihedral reflection subgroups and noncrossing partitions]{On maximal dihedral reflection subgroups and generalized noncrossing partitions}

\begin{document}
	
\maketitle	
	
\begin{abstract}
In this note, we give a new proof of a result of Matthew Dyer stating that in an arbitrary Coxeter group $W$, every pair $t,t'$ of distinct reflections lie in a unique maximal dihedral reflection subgroup of $W$. Our proof only relies on the combinatorics of words, in particular we do not use root systems at all. As an application, we deduce a new proof of a recent result of Delucchi-Paolini-Salvetti, stating that the poset $[1,c]_T$ of generalized noncrossing partitions in any Coxeter group of rank $3$ is a lattice. We achieve this by showing the more general statement that any interval of length $3$ in the absolute order on an arbitrary Coxeter group is a lattice. This implies that the interval group attached to any interval $[1,w]_T$ where $w$ is an element of an arbitrary Coxeter group with $\ell_T(w)=3$ is a quasi-Garside group.   
\end{abstract}	

\vspace{1cm}

\section{Maximal dihedral reflection subgroups of Coxeter groups}\label{sec_1}

Given a Coxeter system $(W,S)$, we denote by $T$ its set $\bigcup_{w\in W} w S w^{-1}$ of reflections, and by $\ell: W \rightarrow \mathbb{Z}_{\geq 0}$ the length function on $W$ with respect to the generating set $S$. We de not necessarily assume $S$ to be finite. 

We recall the following result of Dyer, which is a consequence of the so-called exchange condition. 

\begin{lemma}\label{palindromic}
Let $(W,S)$ be a Coxeter system. Let $t\in T$ be a reflection of $(W,S)$ and let $t_1 t_2 \cdots t_{2n+1}$ be a reduced expression of $t$ ($t_i\in S$). Then $t=t_1 t_2\cdots t_{n+1} t_n \cdots t_2 t_1$ is a reduced expression of $t$.
\end{lemma}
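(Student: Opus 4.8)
The plan is to derive the statement in one shot from the strong exchange condition, which is itself a standard consequence of the exchange condition referred to just before the lemma. The key observation is that a reflection squares to the identity, so $t\cdot t=1$ and in particular $\ell(t\cdot t)=0<2n+1=\ell(t)$. This is exactly the input needed to apply the strong exchange condition to the given reduced expression $t_1t_2\cdots t_{2n+1}$ of the element $t$, together with the reflection $t$ itself. The only slightly unusual move — and really the only idea in the proof — is to take the ``exchanged'' reflection to be $t$ rather than a simple reflection.

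Carrying this out, the strong exchange condition produces an index $j\in\{1,\dots,2n+1\}$ with $t\cdot t=t_1\cdots t_{j-1}t_{j+1}\cdots t_{2n+1}$, the word obtained from the reduced expression by deleting the $j$-th letter. Since $t\cdot t=1$, this means $t_{j+1}t_{j+2}\cdots t_{2n+1}=(t_1t_2\cdots t_{j-1})^{-1}=t_{j-1}t_{j-2}\cdots t_1$, and hence
\[
t=t_1\cdots t_{2n+1}=(t_1\cdots t_{j-1})\,t_j\,(t_{j+1}\cdots t_{2n+1})=t_1\cdots t_{j-1}\,t_j\,t_{j-1}\cdots t_1 .
\]
It then remains only to identify $j$. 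The word $t_{j+1}\cdots t_{2n+1}$ is a subword of a reduced expression, hence reduced, of length $2n+1-j$; the word $t_{j-1}\cdots t_1$ is the reverse of the (reduced) subword $t_1\cdots t_{j-1}$, hence a reduced expression of its inverse, of length $j-1$. As these two reduced expressions represent the same element of $W$, their lengths coincide: $2n+1-j=j-1$, that is, $j=n+1$. Substituting back gives $t=t_1\cdots t_n\,t_{n+1}\,t_n\cdots t_1$.

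Finally, this last expression has $2n+1=\ell(t)$ letters and evaluates to $t$, so it is automatically a reduced expression, which finishes the argument. I do not expect a serious obstacle here: the only step where anything genuinely happens is the length count pinning down $j=n+1$, and the one small point worth stating carefully along the way is the elementary fact that the reverse of a reduced expression is again a reduced expression, which is immediate from $\ell(x)=\ell(x^{-1})$. (The argument is uniform in $n$; for $n=0$ it degenerates to the tautology $t=t_1$.)
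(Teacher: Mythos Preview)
Your argument is correct. The paper does not actually prove this lemma but merely cites Dyer's thesis, remarking only that it is ``a consequence of the so-called exchange condition''; your proof via the strong exchange condition (applied with the reflection $t$ itself) is exactly the kind of argument being alluded to, and the length count forcing $j=n+1$ is the standard way to pin down the deleted letter.
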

\begin{proof}
See~\cite[Lemma 1.4]{Dyer_thesis}. 
\end{proof}

We deduce the following.

\begin{lemma}\label{lem_tech}
Let $(W,S)$ be a Coxeter system. Let $w,w'\in W$, $s, t\in S$ such that $w=sw't$, $\ell(w)=\ell(w')+2$, and both $sw', w't$ are reflections of $(W,S)$. Then $s\neq t$ and there is $n\geq 1$ such that $(st)^n$ is a reduced expression of $w$. 
\end{lemma}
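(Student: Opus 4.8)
The plan is to establish, under the hypotheses, that $\ell(w')$ is even, say $\ell(w')=2k$, that $s\neq t$, and in fact that $w'=(ts)^{k}$; this gives $w=sw't=s(ts)^{k}t=(st)^{k+1}$, which is a reduced expression since its length equals $2k+2=\ell(w)$, so $n=k+1$ works and in particular $s\neq t$. The natural way to prove this is by induction on $\ell(w')$, but one must phrase it symmetrically in $s,t$ and in left/right, because the element one passes to in the inductive step exchanges the roles of $s$ and $t$. So I would prove: \emph{for every Coxeter system, every $a,b\in S$ and every $v\in W$ with $av\in T$, $vb\in T$ and $\ell(avb)=\ell(v)+2$, one has $\ell(v)$ even, $a\neq b$, and $v=(ba)^{\ell(v)/2}$}; the lemma is the case $v=w'$, $a=s$, $b=t$.

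The first ingredient is elementary length bookkeeping. Since $avb=(av)b=a(vb)$ and multiplication by a simple reflection changes $\ell$ by exactly $\pm1$, the equality $\ell(avb)=\ell(v)+2$ forces $\ell(av)=\ell(vb)=\ell(v)+1$ and $\ell(avb)=\ell(av)+1$; as $av\in T$ has odd length, $\ell(v)$ is even, say $\ell(v)=2k$. The base case $k=0$ is immediate: $v=1$, and $\ell(ab)=\ell(avb)=2$ forces $a\neq b$, so $v=(ba)^{0}$.

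Now suppose $k\geq1$. From $vb\in T$ we get $(vb)^{2}=1$, whence $vbv=b$ and therefore $bvb=v^{-1}$; in particular $\ell(b\cdot vb)=\ell(bvb)=\ell(v^{-1})=2k<2k+1=\ell(vb)$, so the reflection $vb$ admits a reduced expression beginning with $b$. Applying Lemma~\ref{palindromic} to such an expression and deleting the trailing $b$ produces a reduced expression of $v$ beginning with $b$; hence $\ell(bv)=2k-1$, and we may fix a reduced expression $v=v_{1}v_{2}\cdots v_{2k}$ with $v_{1}=b$. Then $a\,v_{1}v_{2}\cdots v_{2k}$ is a reduced expression of the reflection $av$, of length $2k+1$, and applying Lemma~\ref{palindromic} to it and deleting the leading $a$ yields the reduced expression
\[
v=v_{1}v_{2}\cdots v_{k}\,v_{k-1}v_{k-2}\cdots v_{1}\,a
\]
(with the middle block empty when $k=1$). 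This expression ends in $a$, so $va=v_{1}v_{2}\cdots v_{k}v_{k-1}\cdots v_{1}$ is a reduced expression of $va$, of length $2k-1$, and it \emph{begins with} $v_{1}=b$; therefore $\ell(b\cdot va)=2k-2$. Put $v':=bva$, so that $v=bv'a$, $\ell(v')=2k-2$, and $bv'=va=a(av)a\in T$, $v'a=bv=b(vb)b\in T$, while $\ell(bv'a)=\ell(v)=2k$. By the induction hypothesis applied to $v'$ with the pair $(b,a)$ we obtain $a\neq b$ and $v'=(ab)^{k-1}$, and hence $v=bv'a=b(ab)^{k-1}a=(ba)^{k}$, which closes the induction.

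The step I expect to be the real obstacle is pinning down $\ell(bva)=2k-2$, since parity alone only gives $\ell(bva)\in\{2k-2,2k\}$. The idea is to use Lemma~\ref{palindromic} twice: once to produce, from the identity $bvb=v^{-1}$, a reduced expression of $v$ starting with the \emph{correct} generator $b$, and then again at the reflection $av$ to convert this into a reduced expression of $va$ starting with $b$. The other point needing care is purely bookkeeping---the roles of $a$ and $b$ get swapped when one invokes the induction hypothesis---which is exactly why it is cleanest to carry the precise statement ``$v=(ba)^{\ell(v)/2}$'' through the induction rather than the weaker assertion that $w$ is a power of $st$.
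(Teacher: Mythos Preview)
Your proof is correct and follows essentially the same approach as the paper's: both argue by induction on $\ell(w')$, use Lemma~\ref{palindromic} on the reflection $w't=sw$ (to extract a reduced expression beginning and ending with $t$) and then on the reflection $sw'=wt$ (to extract one beginning and ending with $s$), and thereby peel off an outer $t$ and $s$ from $w'$ to pass to a shorter element with the roles of $s$ and $t$ swapped. Your explicit symmetric reformulation ``$v=(ba)^{\ell(v)/2}$'' is a clean way to carry the induction hypothesis, but the underlying argument is the same as the paper's.
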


\begin{proof}
We argue by induction on $\ell(w')$. If $\ell(w')=0$, then $w=st$ and $\ell(w)=2$, which forces $s\neq t$. We thus get the result with $n=1$. 

Assume that $\ell(w')>0$. Since $\ell(w')=\ell(w)-2$, we have $\ell(w')=\ell(swt)=\ell(sw)-1$. Hence $t$ is a (left or right) descent of the reflection $sw=w't$. Let $t t_1 \cdots t_k t_{k-1} \cdots t_1 t$ be a palindromic reduced expression of $sw$ beginning by $t$. Then $st t_1 \cdots t_k t_{k-1} \cdots t_1$ is a reduced expression of $wt$ since $\ell(sw)=\ell(wt)=\ell(w)-1$). Since by assumption $wt=sw'$ is a reflection, applying Lemma~\ref{palindromic}, we get that $s t t_1 \cdots t_{k-1} t_{k-2} \cdots t_1 t s$ is a palindromic reduced expression of $wt$. Hence $t t_1 \cdots t_{k-1} t_{k-2} \cdots t_1 t s$ is a reduced expression of $swt=w'$. Setting $w''= t_1 \cdots t_{k-1} t_{k-2} \cdots t_1 t$, we have \begin{itemize}
	\item $\ell(w')=\ell(w'')+2$,
	\item $w'= t w'' s$, 
	\item $tw''\in T$ since $tw''=w's=s(sw')s$ and $sw'\in T$, and $w''s\in T$ since $w''s=t w'=t(w't)t$ and $w't\in T$.
\end{itemize}
By induction, we thus get that $t\neq s$ and that there is $n\geq 1$ such that $(ts)^n$ is a reduced expression of $w'$. Since $w=sw't$ and $\ell(w)=\ell(w')+2$, we get that $(st)^{n+1}$ is a reduced expression of $w$. 
\end{proof}

The following two propositions were suggested to me by Jean-Yves Hée, and the proof of Proposition 1.4 is his proof.  

\begin{prop}\label{prop_1}
Let $(W,S)$ be a Coxeter system. Suppose that there is $w\in W$, $w\neq 1$ such that, for every $s\in S$, there is $s'\in T$ such that $w=ss'$. Then $|S|=2$. 
\end{prop}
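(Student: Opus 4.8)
The plan is to reformulate the hypothesis as: $sw\in T$ for every $s\in S$. Indeed $w=ss'$ with $s'\in T$ forces $s'=s^{-1}w=sw$, and conversely $s(sw)=w$. Two immediate consequences should be recorded. Since reflections have odd length and $\ell(sw)=\ell(w)\pm 1$, the integer $\ell(w)$ is even, so (as $w\neq 1$) we may write $\ell(w)=2n$ with $n\geq 1$. Moreover the symmetric property also holds: for every $t\in S$ one has $wt=t(tw)t\in T$, being a conjugate of the reflection $tw$. The goal is then to prove that $w=(st)^n$ for some $s\neq t$ in $S$ (using Lemma~\ref{lem_tech}) and to contradict the existence of any further generator (using Lemma~\ref{palindromic}).

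To reach the hypotheses of Lemma~\ref{lem_tech}, I would fix a right descent $t$ of $w$ and set $\rho:=wt$. By the symmetric property $\rho\in T$, and $\ell(\rho)=\ell(w)-1$. Choose a left descent $s$ of $\rho$. Since $\rho=\rho^{-1}$ we have $D_L(\rho)=D_R(\rho)$, so $s$ is also a right descent of $\rho$, whereas $t$ is not, because $\ell(\rho t)=\ell(w)>\ell(\rho)$; hence $s\neq t$. Now $w':=swt=s\rho$ satisfies $\ell(w')=\ell(\rho)-1=\ell(w)-2$, $w=sw't$, $sw'=wt=\rho\in T$, and $w't=sw\in T$ by hypothesis. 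Lemma~\ref{lem_tech} then applies and yields that $(st)^n$ is a reduced expression of $w$, with $2n=\ell(w)$.

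Finally, assume for contradiction that there exists $r\in S\setminus\{s,t\}$. Since $w$ lies in the standard parabolic subgroup $W_{\{s,t\}}$, all of its left descents lie in $\{s,t\}$, so $\ell(rw)=\ell(w)+1=2n+1$ and the word obtained by prefixing $r$ to $(st)^n$ is therefore a reduced expression of the reflection $rw$. Applying Lemma~\ref{palindromic} to it, $rw=(t_1\cdots t_n)\,t_{n+1}\,(t_1\cdots t_n)^{-1}$ with $t_1=r$ and $t_2,\dots,t_{2n+1}\in\{s,t\}$; hence $rw=r\tau r$ with $\tau:=(t_2\cdots t_n)\,t_{n+1}\,(t_2\cdots t_n)^{-1}\in W_{\{s,t\}}$. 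Comparing with $rw=r(st)^n$ gives $r=\tau^{-1}(st)^n\in W_{\{s,t\}}\cap S=\{s,t\}$, a contradiction. Hence $|S|\leq 2$; and $|S|=1$ is impossible, since then $w=s$ would be the only nontrivial element and $sw=1\notin T$. Therefore $|S|=2$.

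I expect the main obstacle to be the bookkeeping in the second paragraph: one must manufacture a left descent $s$ and a right descent $t$ of $w$, distinct from one another, for which \emph{both} $sw$ and $wt$ are reflections and $\ell(swt)=\ell(w)-2$, and the clean route is to route everything through the reflection $\rho=wt$, whose left and right descent sets coincide. A secondary point, easy to overlook, is that the last step genuinely uses that $\{s,t\}$ spans a \emph{standard parabolic} subgroup of $W$; this is exactly what forbids a genuine third generator $r$ from lying in $\langle s,t\rangle$, and is where the argument would break down for an arbitrary group generated by involutions.
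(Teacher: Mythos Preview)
Your proof is correct and follows essentially the same strategy as the paper: manufacture the hypotheses of Lemma~\ref{lem_tech} to obtain a reduced expression $w=(st)^n$, then invoke Lemma~\ref{palindromic} on $r(st)^n$ to exclude any third generator $r$. The only differences are cosmetic---you start from a right descent $t$ of $w$ and route through the reflection $\rho=wt$ (the paper starts from a left descent $s$ and routes through $sw$), and in the last step you use the parabolic fact $W_{\{s,t\}}\cap S=\{s,t\}$, whereas the paper reads the palindrome of $rw$ from the right to get $rw=(ts)^n t=t(st)^n$ and hence $r=t$ directly.
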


\begin{proof}
Let $s\in S$ such that $\ell(sw)=\ell(w)-1$ (such an $s$ must exist since $w\neq 1$). Let $t_1 t_2\cdots t_{n+1} t_n \cdots t_1$ be a palindromic reduced expression of $sw\in T$. We claim that $w t_1$ is a reflection. Indeed, by assumption there is $t'\in T$ such that $w=t_1 t'$, hence $w t_1= t_1 t' t_1\in T$. Setting $t:=t_1$, $w':=swt$, we have $\ell(w)=\ell(w')+2$, $sw'=wt\in T$, $w't=sw\in T$. We can thus apply Lemma~\ref{lem_tech} to deduce that $t\neq s$ and that there exists $n\geq 1$ such that $w=(st)^n$.

We claim that $S=\{s,t\}$. Suppose that there is $r\in S\backslash\{ s, t\}$. We thus have $rw\in T$. Since $(s t)^n$ is a reduced expression of $w$ and $r\neq s, t$, the expression $r(s t)^{n}$ of the reflection $rw$ is reduced. Applying Lemma~\ref{palindromic} from the right yields $rw=(t s)^{n} t=t (st)^n$, hence $r=t$ since we also have $rw=r(st)^n$, a contradiction. This shows that $S=\{s, t\}$, which concludes the proof.    
\end{proof}

\begin{prop}\label{prop_2}
Let $(W,S)$ be a Coxeter system. Let $w$ in $W$ be a product of two distinct reflections of $W$. Consider the set $$R_w:=\{t\in T \ \vert \ \text{There exists~}t'\in T\text{~such that~}w=tt'\}.$$ Then setting $W_w:=\langle R_w \rangle$, we have $w\in W_w$, and there exists a subset $S'\subseteq R_w$ such that $(W_w, S')$ is a Coxeter system with set of reflections $R_w$. Moreover, for any subset $S'\subseteq R_w$ such that $(W_w, S')$ is a Coxeter system, we have $|S'|=2$. 
\end{prop}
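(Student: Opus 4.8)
The plan is to exhibit a concrete candidate for the generating set $S'$, namely a pair of reflections whose product is $w$, and then verify all three claims: that $W_w$ is dihedral with this generating pair, that its reflection set is exactly $R_w$, and that any Coxeter generating set for $W_w$ has exactly two elements. First I would use Proposition 1.4 as the main engine. By hypothesis $w = t_1 t_1'$ for some distinct $t_1, t_1' \in T$; conjugating the ambient system by an element carrying $t_1$ into $S$, we may assume $s := t_1 \in S$, so that $w = s s'$ with $s' = s w \in T$. Now I would like to show that in fact \emph{every} reflection in $R_w$ arises this way with a genuine simple reflection playing the role of $s$; more precisely, I claim $R_w$ is the reflection set of the dihedral group $\langle s, s' \rangle$ and that $W_w = \langle s, s' \rangle$.

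The key step is the following: for each $r \in R_w$ write $w = r r''$ with $r'' \in T$; then $r'' = r w$, and I want to conclude that $r, r''$ both lie in $\langle s, s'\rangle$. To see this I would pass to a reduced-word picture. Replacing $(W,S)$ by the reflection subgroup $\langle s, s'\rangle$ is not immediately legitimate since $s'$ need not be simple, so instead I would argue directly in $(W,S)$: pick a reduced expression and use Lemma 1.3 exactly as in the proof of Proposition 1.4 to force $w = (su)^n$ for a simple reflection $u \neq s$ (this is where Lemma 1.2 enters), and then Proposition 1.4 applied to the standard parabolic or to $W$ itself shows that the simple reflections $s', s''$ appearing with $w$ are constrained — really the cleanest route is: apply Proposition 1.4 to conclude that there are exactly two simple reflections $s, u$ such that $w$ factors as (simple)$\,\cdot\,$(reflection), so $W$ itself, if generated by the $R_w$-relevant simples, has rank $2$; then $R_w$ is visibly the set of reflections of the rank-$2$ (dihedral) group $\langle s, u\rangle$, $w \in \langle s,u\rangle$, and we may take $S' = \{s, u\}$.

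For the "moreover" clause, suppose $S' \subseteq R_w$ is any Coxeter generating set of $W_w$. Since $w \in W_w$ and $w \neq 1$, and since by construction every simple reflection $s'' \in S'$ lies in $R_w$, for each $s'' \in S'$ there is $t'' \in T(W_w, S')$ with $w = s'' t''$ — here I must check $R_w$ computed inside $W_w$ agrees with $R_w$, i.e. that the reflections of $(W_w, S')$ are exactly $R_w$, which is the content of the first part. Then the hypothesis of Proposition 1.4 is satisfied by the Coxeter system $(W_w, S')$ and the element $w$, so Proposition 1.4 gives $|S'| = 2$ directly.

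The main obstacle I anticipate is the transition between working in the ambient system $(W,S)$ and in the reflection subgroup $(W_w, S')$: reflection subgroups of Coxeter groups are themselves Coxeter systems (Dyer), but one must be careful that the length function and the notion of "reflection" used when invoking Lemmas 1.2–1.3 and Proposition 1.4 are the intrinsic ones for whichever system is in play. The cleanest fix is to prove the first assertion entirely inside $(W,S)$ — showing $R_w$ equals the reflection set of a rank-$2$ dihedral reflection subgroup $D$ with $w \in D$, which follows from the $(su)^n$ normal form plus Proposition 1.4 — and only then invoke the fact that $D$, being dihedral, has all its Coxeter generating sets of size $2$, for which Proposition 1.4 (now applied internally to $D$) again suffices.
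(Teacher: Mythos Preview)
There is a genuine gap in the first part. Your plan hinges on showing that, after a conjugation bringing one factor into $S$, the element $w$ admits a reduced $S$-expression of the form $(su)^n$ with $s,u\in S$, so that $R_w$ becomes the reflection set of the standard dihedral parabolic $\langle s,u\rangle$. This is false in general. Take $W$ universal on $S=\{s_1,s_2,s_3\}$ (all $m_{ij}=\infty$) and $w=s_1\cdot(s_2 s_3 s_2)$: here $s_1\in S$ and $s_2 s_3 s_2\in T$, but $w$ is cyclically reduced of length~$4$ and is not a cyclic rotation of any $(s_i s_j)^2$, so no conjugate of $w$ has a reduced expression involving only two simple letters. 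The underlying reason your appeal to Lemma~\ref{lem_tech} (via the proof of Proposition~\ref{prop_1}) breaks down is that Proposition~\ref{prop_1} needs \emph{every} element of the simple system to lie in $R_w$, since this is what guarantees that the auxiliary element $wt_1$ is again a reflection; in the ambient system $(W,S)$ you have no such control, and in the example above one checks that $R_w\cap S=\{s_1\}$.

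The paper does not attempt to realise $W_w$ inside a standard parabolic of $(W,S)$. Instead it proves the short algebraic fact that $R_w$ is stable under conjugation by its own elements: if $w=tt'=rr'$ with $t,t',r,r'\in T$, then $w=rw^{-1}r=(rtr)(rtt'tr)$, whence $rtr\in R_w$. Combined with Dyer's result \cite[Corollary~3.11]{Dyer_subgroups}, this forces $R_w$ to be exactly the reflection set of the reflection subgroup $W_w=\langle R_w\rangle$ for its canonical Coxeter structure. Only then is Proposition~\ref{prop_1} invoked, \emph{internally} to a Coxeter system $(W_w,S')$ with $S'\subseteq R_w$: every simple of this auxiliary system now satisfies the hypothesis of Proposition~\ref{prop_1}, yielding $|S'|=2$. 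Your treatment of the ``moreover'' clause is essentially this last step and is correct, but it presupposes the identification of $R_w$ with the reflection set of $(W_w,S')$, which is precisely the missing ingredient in your argument.
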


\begin{proof}
Since $w$ is a product of two distinct reflections, we have $R_w\neq\emptyset$. Assume that $w=tt'$, with $t, t'\in T$. Then $t\in R_w$ but since $w=tt'=t'(t'tt')$ and $t'tt'\in T$, we also have $t'\in R_w$. It implies that $w\in W_w$. 

Now let $t, r\in R_w$. We claim that $rtr$ also lies in $R_w$. By definition of $R_w$, we can find $t',r'\in T$ such that $w=tt'=rr'$. We then have $$w=rr'=r(r'r)r=rw^{-1}r=rt'tr=(rtr)(rtt'tr).$$ Since $rtr, rtt'tr\in T$, we deduce that $rtr\in R_w$.

By a result proven independently by Dyer~\cite{Dyer_subgroups} and Deodhar~\cite{Deodhar} (see also Hée~\cite{Hee}), the reflection subgroup $\langle R_w \rangle$ of $W$ generated by $R_w$ is a Coxeter group $W_w$, and we denote by $S_w \subseteq T$ its canonical simple system. Since, as observed above, the set $R_w$ is stable by conjugation by elements of $R_w$, by~\cite[Corollary 3.11]{Dyer_subgroups} we obtain that $R_w$ is the set of reflections of $W_w$ for this canonical Coxeter group structure, and hence that $S_w\subseteq R_w$.

To conclude the proof, it suffices to apply Proposition~\ref{prop_1} to the Coxeter system $(W_w, S')$ and the element $w$, where $S'\subseteq R_w$ is any subset $S'\subseteq R_w$ such that $(W_w, S')$ is a Coxeter system.    
\end{proof}

One may deduce the following well-known result. 

\begin{cor}\label{cor_sub_dih}
Let $(W,S)$ be a dihedral Coxeter system (i.e., with $|S|=2$) and $W'\subseteq W$ be a reflection subgroup of $W$. Then $W'$ has rank at most two.	
\end{cor}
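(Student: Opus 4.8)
The plan is to apply Proposition~\ref{prop_1} to $W'$, regarded as a Coxeter group in its own right. Recall from Dyer~\cite{Dyer_subgroups} (see also~\cite{Deodhar,Hee}) that the reflection subgroup $W'$ is a Coxeter group for a canonical simple system $S'\subseteq T$, and that the set of reflections of the Coxeter system $(W',S')$ is exactly $T_{W'}:=W'\cap T$. Our task is to show $|S'|\leq 2$. If $W'$ contains at most one reflection of $W$, then $|S'|\leq|W'\cap T|\leq 1$ and we are done; so from now on we assume that $W'$ contains two distinct reflections $r_1\neq r_2$ of $W$, and we set $v:=r_1r_2$. Then $v\in W'$, and $v\neq 1$ since $r_1\neq r_2$.

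Since $|S|=2$, say $S=\{s,t\}$, the group $W$ is dihedral; let $C:=\langle st\rangle$ be its cyclic subgroup of index two, so that $W=C\sqcup T$ with $T=W\setminus C$ equal to the set of reflections of $W$, and with $T\cdot T\subseteq C$ and $C\cdot T\subseteq T$. In particular $v=r_1r_2\in C$. The key step is to check that $v$ satisfies the hypothesis of Proposition~\ref{prop_1} inside the Coxeter system $(W',S')$: let $r\in S'$. Then $r\in T=W\setminus C$, hence $rv\in C\cdot T=T$; moreover $rv\in W'$, because $r,v\in W'$. Therefore $rv\in W'\cap T=T_{W'}$, and since $r$ is an involution we have $v=r(rv)$, exhibiting $v$ as a product of $r$ with an element of $T_{W'}$. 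As this holds for every $r\in S'$ and $v\in W'\setminus\{1\}$, Proposition~\ref{prop_1} applied to $(W',S')$ gives $|S'|=2$, so $W'$ has rank at most two.

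The only slightly delicate point is the compatibility of the two notions of reflection, namely that an element of $W'$ which is a reflection of $W$ is again a reflection for the canonical Coxeter structure on $W'$ — equivalently, $T_{W'}=W'\cap T$ — which is part of Dyer's structure theory of reflection subgroups~\cite{Dyer_subgroups}; granting this, the argument rests only on the elementary coset combinatorics of the dihedral group $W$ together with Proposition~\ref{prop_1}. Note that applying Proposition~\ref{prop_1} (or Proposition~\ref{prop_2}) directly to $W$ itself yields no information about $W'$: any product of two distinct reflections of $W$ lies in $C$, so the associated set $R_w$ is all of $T$ and the associated subgroup is $W$ again; the whole point is to run the argument inside the Coxeter system $(W',S')$.
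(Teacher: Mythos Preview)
Your proof is correct and follows essentially the same approach as the paper: both arguments pick $v=r_1r_2$ for two distinct reflections of $W'$, use the dihedral structure of the ambient $W$ to see that $rv$ is again a reflection of $W'$ for every reflection $r$ of $W'$, and then invoke the results of Section~\ref{sec_1}. The only difference is cosmetic: you apply Proposition~\ref{prop_1} directly to $(W',S')$, whereas the paper phrases the same step via Proposition~\ref{prop_2}, showing that the set $R_v'$ computed inside $W'$ equals all of $T'=W'\cap T$. One tiny slip: since $r\in T$ and $v\in C$, you have $rv\in T\cdot C$, not $C\cdot T$; of course $T\cdot C=C\cdot T=T$ in a dihedral group, so the conclusion is unaffected.
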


\begin{proof}
Recall from~\cite{Dyer_subgroups} that the set of reflections of $W'$ is given by $T':=W'\cap T$. Suppose that $W'$ has rank at least two, that is, that there are at least two distinct reflections $r,r'$ in $W'$. Let $w:=rr'\in W'$. By Proposition~\ref{prop_2}, the set $$R_{w}':=\{t\in T' \ \vert \ \text{There exists~}t'\in T'\text{~such that~}w=tt'\}$$ is the set of reflections in a Coxeter system of rank two. We claim that $R_{w}'=T'$. Hence let $t\in T'$. Since $w$ is a product of two reflections of $W$, which is dihedral, and $t\in T$, we have $tw\in T$. As both $t$ and $w$ lie in $W'$ we also have $tw\in W'$. Hence $tw\in T'$, from what we deduce that $t\in R_w'$, yielding the inclusion $T'\subseteq R_w'$. The other inclusion is immediate, hence $R_w'=T'$. 
\end{proof}

The following theorem was proven by Dyer in~\cite[Corollary 3.18]{Dyer_thesis} (see also~\cite[Remark 3.2]{Dyer_Bruhat} and~\cite{Dyer_rank}) using root systems. 

\begin{theorem}\label{thm_max_dih}
Let $t,t'$ be two distinct reflections in an arbitrary Coxeter system $(W,S)$. Then there is a unique maximal dihedral reflection subgroup $W(t,t')$ of $W$ containing $t$ and $t'$. It is given by the dihedral reflection subgroup $W_w$ of Proposition~\ref{prop_2}, where $w=tt'$. 
\end{theorem}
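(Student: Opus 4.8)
The plan is to show that $W(t,t') := W_w$ with $w = tt'$ does the job, by establishing two things: first, that $W_w$ is itself a dihedral reflection subgroup containing $t$ and $t'$; and second, that any dihedral reflection subgroup $W'$ containing both $t$ and $t'$ is contained in $W_w$. The first point is essentially already in hand: by Proposition~\ref{prop_2}, $W_w = \langle R_w\rangle$ carries a Coxeter structure $(W_w,S')$ with $|S'|=2$ and with set of reflections exactly $R_w$, so it is dihedral; and since $w=tt'$ is a product of the two distinct reflections $t,t'$, both $t$ and $t'$ lie in $R_w$ (take $t'=t^{-1}w$ and $t=t'^{-1}w$ as in the proof of Proposition~\ref{prop_2}), hence in $W_w$.

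For the second point, let $W'$ be any dihedral reflection subgroup of $W$ with $t,t'\in W'$. By the Dyer--Deodhar theorem its set of reflections is $T' := W'\cap T$, and since $W'$ has rank $2$, $T'$ is the reflection set of a dihedral Coxeter system. The element $w = tt'$ lies in $W'$ and is a product of two distinct reflections of $W'$. The key claim is that \emph{every} reflection of $W'$ lies in $R_w$, i.e.\ $T'\subseteq R_w$; this gives $W' = \langle T'\rangle \subseteq \langle R_w\rangle = W_w$, as desired. To prove $T'\subseteq R_w$, fix $r\in T'$. Since $r\in T'\subseteq T$ and $w$ is a product of two reflections of the dihedral group $W'$, the product $rw$ is again a reflection of $W'$: indeed, in any dihedral group, left-multiplying a rotation (product of two reflections) by a reflection yields a reflection, and $rw\in W'$ because $r,w\in W'$. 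Thus $rw\in W'\cap T = T'\subseteq T$, and from $w = r(rw)$ with $rw\in T$ we conclude $r\in R_w$. This is exactly the argument already used in the proof of Corollary~\ref{cor_sub_dih}, now applied to an arbitrary dihedral $W'$ rather than to $W$ itself.

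Combining the two points: $W_w$ is a dihedral reflection subgroup containing $t,t'$, and it contains every dihedral reflection subgroup containing $t,t'$; in particular any such subgroup that is maximal must equal $W_w$, and $W_w$ is itself maximal among them, so the maximal dihedral reflection subgroup containing $t$ and $t'$ exists and is unique, namely $W(t,t') = W_w$.

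I expect the only genuinely delicate point to be the assertion that $rw\in T'$ for $r\in T'$ and $w$ a product of two distinct reflections of $W'$ --- i.e.\ that multiplying a nontrivial rotation of the dihedral group $W'$ by one of its reflections lands back in the reflection set. This is a completely elementary fact about dihedral groups (reflections and rotations alternate under multiplication by a reflection), but one should be slightly careful that $w$ is not the identity, which is guaranteed since $t\neq t'$. Everything else is a direct invocation of Proposition~\ref{prop_2} and the Dyer--Deodhar description of reflection subgroups, so no further obstacles are anticipated.
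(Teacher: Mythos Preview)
Your proof is correct and follows essentially the same approach as the paper: show $t,t'\in W_w$ via Proposition~\ref{prop_2}, and for any dihedral reflection subgroup $W'$ containing $t,t'$, use the elementary dihedral fact that reflection times nontrivial rotation is a reflection to conclude $T'\subseteq R_w$, hence $W'\subseteq W_w$. The paper's argument is slightly terser but identical in substance.
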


\begin{proof}
Set $w:=tt'$ and consider the dihedral reflection subgroup $W_w$ from Proposition~\ref{prop_2}. Since $tt'=t'(t'tt')$ we have that $t, t'\in W_w$. We claim that $W(t,t'):=W_w$ is the unique maximal dihedral reflection subgroup of $W$ containing $t$ and $t'$. 

Assume that $W'$ is a dihedral reflection subgroup of $W$ containing both $t$ and $t'$. Then $W'$ contains $w$. Since $w$ is a product of two reflections and $W'$ is dihedral, then for any reflection $r$ of $W'$ we have that $rw$ is a reflection of $W'$ (hence of $W$). Hence $r\in R_w \subseteq W_w$, and thus $W_w$ contains every reflection of $W'$. It follows that $W'\subseteq W_w$, which concludes the proof.  
\end{proof}

We list a few properties of maximal dihedral reflection subgroups which we shall use in the next section (see also~\cite[Lemma 3.1]{Dyer_Bruhat} for point~(2)). 

\begin{lemma}\label{lem_4}
Let $t_1, t_2, t_3, t_4$ be reflections in $T$ such that $t_1\neq t_2$ and $t_1t_2=t_3t_4$. Then \begin{enumerate}
\item $t_3\neq t_4$ and $W(t_1,t_2)=W(t_3, t_4)$,
\item The reflection subgroup $W'=\langle t_1, t_2, t_3, t_4 \rangle$ is dihedral. 
\item If $1\neq w=rr' \in W(t_1, t_2)$ with $r,r'\in T$, then $W(r,r')= W(t_1, t_2)$ and hence $r,r'\in W(t_1, t_2)$. 
\end{enumerate}
\end{lemma}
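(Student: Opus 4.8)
The plan is to reduce everything to Proposition~\ref{prop_2} and Theorem~\ref{thm_max_dih}. For point~(1), first I would rule out $t_3 = t_4$: if $t_3 = t_4$ then $t_1 t_2 = t_3 t_4 = 1$, forcing $t_1 = t_2$, contradicting the hypothesis. So $t_3 \neq t_4$, and in particular $w := t_1 t_2 = t_3 t_4$ is a product of two distinct reflections, so Proposition~\ref{prop_2} applies to $w$ and produces a dihedral reflection subgroup $W_w$ with reflection set $R_w$. By Theorem~\ref{thm_max_dih}, the unique maximal dihedral reflection subgroup containing $t_1, t_2$ is $W(t_1,t_2) = W_w$, and likewise the unique one containing $t_3, t_4$ is $W(t_3, t_4) = W_w$ — note the key point is that $W_w$ depends only on $w$, not on the chosen factorization. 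Hence $W(t_1, t_2) = W(t_3, t_4)$.

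For point~(2), observe that $t_1, t_2 \in W(t_1, t_2) = W_w$ and $t_3, t_4 \in W(t_3, t_4) = W_w$, so all four reflections lie in the single dihedral group $W_w$; therefore $W' = \langle t_1, t_2, t_3, t_4\rangle \subseteq W_w$. A subgroup of a dihedral group generated by reflections (equivalently, a reflection subgroup of a dihedral Coxeter group) has rank at most two by Corollary~\ref{cor_sub_dih}, so $W'$ is dihedral.

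For point~(3), suppose $1 \neq w = r r'$ with $r, r' \in T$ and $w \in W(t_1, t_2)$. Since $W(t_1,t_2)$ is a dihedral reflection subgroup containing $r r'$, and $r$ is a reflection of $W(t_1,t_2)$ lying in it, the product $r \cdot (rr') = r'$ shows $r' \in W(t_1,t_2)$ as well; more directly, $r = w r'^{-1} \cdot$ (rewriting) — cleaner is: because $W(t_1,t_2)$ is dihedral, the coset $w$ of its rotation subgroup is a product of two of its reflections iff it lies in that rotation subgroup, and any such factorization uses reflections of $W(t_1,t_2)$; hence $r, r' \in W(t_1, t_2)$. If $r = r'$ then $w = 1$, excluded, so $r \neq r'$, and then $W(r, r')$ is the unique maximal dihedral reflection subgroup containing $r$ and $r'$ by Theorem~\ref{thm_max_dih}. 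Since $W(t_1, t_2)$ is itself a dihedral reflection subgroup containing $r$ and $r'$, maximality gives $W(t_1, t_2) \subseteq W(r, r')$; applying the same argument with the roles reversed (note $t_1, t_2$ are reflections of $W(t_1,t_2)$ with $t_1 t_2 = w$, and one checks $t_1, t_2 \in W(r,r')$ by the same rotation-subgroup reasoning applied inside $W(r,r')$, using $w \in W(r,r')$) yields $W(r, r') \subseteq W(t_1, t_2)$, hence equality.

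The main obstacle, such as it is, lies in point~(3): one must be careful that the factorizations of $w$ as a product of two reflections all live inside the relevant dihedral subgroup. The cleanest way to see this is the elementary fact that in a dihedral reflection subgroup $D$ with rotation subgroup $C$, an element lies in $C$ precisely when it is a product of two reflections of $D$, and every factorization of an element of $C$ into two reflections of the ambient group $W$ automatically uses reflections of $D$ — this is exactly the content of the second paragraph of the proof of Theorem~\ref{thm_max_dih}. Once this is in hand, all three points follow formally from Proposition~\ref{prop_2}, Theorem~\ref{thm_max_dih}, and Corollary~\ref{cor_sub_dih}.
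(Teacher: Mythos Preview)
Your treatment of points~(1) and~(2) is correct and essentially identical to the paper's: use that $W(t_1,t_2)=W_w=W(t_3,t_4)$ depends only on $w$, and then invoke Corollary~\ref{cor_sub_dih}.

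Point~(3), however, has a genuine gap. You assert that $r,r'\in W(t_1,t_2)$ on the grounds that ``any factorization of an element of $C$ into two reflections of the ambient group $W$ automatically uses reflections of $D$''. That is precisely what is to be proved; stated for an arbitrary dihedral reflection subgroup $D$ it is false, and the second paragraph of the proof of Theorem~\ref{thm_max_dih} does not establish it. What that paragraph actually shows is the inclusion $D\subseteq W_w$ (any dihedral reflection subgroup containing $w$ sits inside $W_w$), not that $r,r'\in D$. You also write ``$t_1 t_2 = w$'' when arguing the reverse inclusion, but $w$ is an arbitrary nontrivial element of $W(t_1,t_2)$, not $t_1t_2$ itself, so that step is simply wrong.

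Your idea can be salvaged, but the logic must be reversed: from $w=rr'\in W(t_1,t_2)$ and the argument in Theorem~\ref{thm_max_dih} one gets $W(t_1,t_2)\subseteq W_w=W(r,r')$; then maximality of $W(t_1,t_2)$ forces equality, and only \emph{then} do you obtain $r,r'\in W(t_1,t_2)$. The paper avoids this detour altogether: since $w$ is a nontrivial rotation in the dihedral group $W(t_1,t_2)$, one may write $w=qq'$ with $q,q'\in W(t_1,t_2)\cap T$; uniqueness in Theorem~\ref{thm_max_dih} gives $W(t_1,t_2)=W(q,q')$, and then part~(1) applied to $qq'=rr'$ yields $W(q,q')=W(r,r')$.
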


\begin{proof}
Thanks to Theorem~\ref{thm_max_dih}, we have $W(t_1, t_2)=W_{t_1t_2}=W_{t_3t_4}=W(t_3, t_4)$, which yields $(1)$. 	
	
For $(2)$, by $(1)$ we have that $t_i\in W(t_1, t_2)$ for all $i=1, 2, 3, 4$, hence $W'$ is a reflection subgroup of $W(t_1,t_2)$. By Corollary~\ref{cor_sub_dih} above and since $t_1 \neq t_2$, it is dihedral. 

For $(3)$, let $q, q'\in W(t_1, t_2)\cap T$ such that $w=q q'$. Then $W(q,q')$ and $W(t_1, t_2)$ are two maximal dihedral reflection subgroups containing $q$ and $q'$, we thus have $W(t_1, t_2)=W(q, q')$ by Theorem~\ref{thm_max_dih}. Now since $qq'=rr'$, by $(1)$ we conclude that $W(q,q')=W(r,r')$. 
\end{proof}

\section{Application: lattice property of intervals of length three in noncrossing partitions}

Let $(W,S)$ be a Coxeter system. Denote by $\ell_T: W \longrightarrow \mathbb{Z}_{\geq 0}$ the length function on $W$ with respect to the generating set $T:=\bigcup_{w\in W} w S w^{-1}$ of $W$, and recall that we denote by $\ell$ the length function with respect to $S$. 

The \defn{absolute order} $\leq_T$ on $W$ is defined by $u\leq_T v$ if and only if $$\ell_T(u)+\ell_T(u^{-1}v)=\ell_T(v).$$ Given $u, v\in W$ with $u\leq_T v$, we denote by $[u,v]_T$ the corresponding interval in the absolute order, that is, the partially ordered set $\{w\in W~|~u\leq_T w\leq_T v\}$ endowed with the restriction of $\leq_T$. 

If $S$ is finite, a \defn{(standard) Coxeter element} $c$ in $W$ is the product of all the elements of $S$ in some order. One has $\ell(c)=\ell_T(c)=n:=|S|$ (this can be deduced for instance from~\cite[Theorem 1.1]{Dyer_refn}). If $c$ is a Coxeter element in $W$, the poset $[1,c]_T$ is sometimes called the poset of \defn{(generalized) noncrossing partitions} (associated to the Coxeter group $W$ and to the Coxeter element $c$). The reason for such a terminology is that in type $A_n$, the poset $[1,c]_T$ is isomorphic to the poset of combinatorial noncrossing partitions of the set $\{1,2,\dots, n+1\}$ (ordered by refinement)--see~\cite[Theorem 1]{Biane}.

Note that, since $T$ is stable by conjugation, if $c$ and $c'$ are Coxeter elements which are conjugate in $W$, say $c'=w c w^{-1}$ for some $w\in W$, then the posets $[1,c]_T$ and $[1,c']_T$ are isomorphic via the map $x \mapsto w x w^{-1}$. In a Coxeter system there can be several conjugacy classes of Coxeter elements in general, but in type $A_n$ and more generally in any Coxeter system whose underlying Coxeter graph is a tree, all the Coxeter elements are (cyclically) conjugate~\cite[Theorem 3.1.4]{GP}. In type $A_n$, the interval $[1,c]_T$ is a lattice since combinatorial noncrossing partitions form a lattice. More generally, the poset $[1,c]_T$ is known to be a lattice for every Coxeter element $c$ in every finite Coxeter group~\cite{Dual, BW, Reading}, in types $\widetilde{A}_n$ and $\widetilde{C}_n$ for some choices of Coxeter elements~\cite{Dig1, Dig2}, in type $\widetilde{G}_2$~\cite{McCammond}, for universal Coxeter systems~\cite{Bessis_free}, and for all Coxeter systems of rank three~\cite{DPS}. In those affine types which are not mentioned above, it has been shown that $[1,c]_T$ is never a lattice~\cite{McCammond}. For other Coxeter groups, to the best of our knowledge this question is completely open.  

The lattice property of $[1,c]_T$ has important applications in the study of the Artin-Tits group $B_W$ attached to $W$.  The Artin-Tits group $B_W=B(W,S)$ attached to a Coxeter system $(W,S)$ is the group with the same presentation as $(W,S)$, except that the relations $s^2=1$, $\forall s\in S$ are removed. There is a canonical surjection $B_W\twoheadrightarrow W$ induced by $\mathbf{s}\mapsto s$, where $\mathbf{s}$ is the Artin--Tits group generator corresponding to $s\in S$. Unlike in Coxeter groups, the word problem in Artin--Tits groups is still open in general. When $[1,c]_T$ is a lattice, the so-called interval group $G([1,c]_T)$ which can be built from this lattice is a (quasi-)Garside group (see~\cite[Section 2.2]{Neaime} and the references therein for basics on interval Garside structures). For $W$ of spherical type, this group is known to be isomorphic to $B_W$~\cite{Dual}, and this yields new proofs that the word problem in $B_W$ is solvable, as Garside groups have a solvable word problem (among other remarkable properties). See~\cite{Dual, Garside} for more on the topic. 

It is thus natural to try to address the following question:

\begin{question}
	For which Coxeter systems $(W,S)$ and which choice of Coxeter element $c$ is the poset $[1,c]_T$ a lattice ? 
\end{question}

As an application of the results of Section~\ref{sec_1}, we give a new proof of a recent theorem of Delucchi, Paolini and Salvetti~\cite[Theorem 4.2]{DPS}, stating that $[1,c]_T$ is a lattice in all Coxeter groups of rank three, which only relies on the combinatorics of words, by showing the following more general result.  

\begin{theorem}[{Lattice property of length three intervals in absolute order}]\label{thm:main}
	Let $(W,S)$ be a Coxeter system. Let $u, v\in W$ such that $u\leq_T v$ and $\ell_T(v)=\ell_T(u)+3$. Then the interval $[u,v]_T$ is a lattice. In particular, for every Coxeter element $c$ in a Coxeter system $(W,S)$ of rank three, the poset $[1,c]_T$ of generalized noncrossing partitions is a lattice. 
\end{theorem}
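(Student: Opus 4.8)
The plan is to reduce to the case $u=1$, exploit the self-duality of $[1,w]_T$ to reduce to the existence of joins, observe that the only pair whose join is not immediate is a pair of distinct atoms, and control such a pair by means of the maximal dihedral reflection subgroup $W(t,t')$ from Section~\ref{sec_1}.

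First I would record that left multiplication by $u^{-1}$ is a poset isomorphism $[u,v]_T\to[1,u^{-1}v]_T$: using $\ell_T(u)+\ell_T(u^{-1}v)=\ell_T(v)$ together with the conjugation-invariance of $\ell_T$, one checks that $u\leq_T x\leq_T v$ holds if and only if $1\leq_T u^{-1}x\leq_T u^{-1}v$, and that this correspondence preserves $\leq_T$. So we may assume $u=1$; put $w:=u^{-1}v$, so that $\ell_T(w)=3$, and write $P:=[1,w]_T$. The map $x\mapsto x^{-1}w$ sends $P$ to itself and reverses $\leq_T$ (same two ingredients), so $P$ is self-dual, and it is enough to show that any two elements of $P$ have a join. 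Now $P$ is graded by $\ell_T$ with values in $\{0,1,2,3\}$, bottom $1$ and top $w$. A pair $\{a,b\}$ has a join for trivial reasons when one of $a,b$ is $1$ or $w$; and when $a,b$ both have rank $2$, or one has rank $1$ and the other rank $2$, a one-line check shows that the set of common upper bounds of $\{a,b\}$ equals $\{w\}$ unless $a$ and $b$ are comparable, so again the join exists. The only remaining case is a pair $t\neq t'$ of atoms, i.e.\ distinct reflections with $t,t'\leq_T w$.

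So fix such $t,t'$ and set $D:=W(t,t')$ (Theorem~\ref{thm_max_dih}). I would use two facts. First, $w\notin D$: if $x\in D$ and $x\neq 1$, then either $x$ is a reflection of $W$, or, picking any reflection $\sigma$ of $D$, we have $x=\sigma(\sigma x)$ with $\sigma x$ a reflection of $W$; in either case $\ell_T(x)\leq 2<3$. Second, every rank-$2$ common upper bound $z$ of $\{t,t'\}$ in $P$ lies in $D$: writing $z=tr=t'r'$ with $r,r'\in T$ (possible since $\ell_T(tz)=\ell_T(t'z)=1$) and noting $t\neq r$ (otherwise $z=1$), Lemma~\ref{lem_4}(1) gives $W(t,r)=W(t',r')$, a dihedral reflection subgroup containing both $t$ and $t'$, hence contained in $D$ by Theorem~\ref{thm_max_dih}, so $z=tr\in D$. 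Since the common upper bounds of $\{t,t'\}$ in $P$ are $w$ together with its rank-$2$ common upper bounds, it suffices to prove the \emph{Key claim: there is at most one element $z\in D$ with $\ell_T(z)=2$ and $z\leq_T w$}. Granting it, the common upper bounds of $\{t,t'\}$ form $\{w\}$ or $\{z,w\}$ for the unique such $z$; this has a least element in both cases, so $t\vee t'$ exists, $P$ is a lattice, and in particular $[1,c]_T$ is a lattice for every Coxeter element $c$ of a rank-three system, since then $\ell_T(c)=|S|=3$ and $1\leq_T c$.

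To prove the Key claim I would argue by contradiction: suppose $z_1\neq z_2$ both satisfy $z_i\in D$, $\ell_T(z_i)=2$ and $z_i\leq_T w$. From $z_i\leq_T w$ we get $\ell_T(z_i^{-1}w)=1$, so $p_i:=z_i^{-1}w\in T$ and $w=z_ip_i$; hence $z_1^{-1}z_2=p_1p_2$. Thus $1\neq z_1^{-1}z_2=p_1p_2$ lies in $D=W(t,t')$ with $p_1,p_2\in T$, so Lemma~\ref{lem_4}(3) gives $p_1,p_2\in D$, whence $w=z_1p_1\in D$, contradicting $w\notin D$. This establishes the Key claim and the theorem. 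The step I expect to be the main obstacle is precisely this Key claim; the crucial realisations are that pairs of atoms are the only possible obstruction to the lattice property and that the upper bounds of a pair of atoms $t,t'$ are entirely governed by the maximal dihedral reflection subgroup $W(t,t')$, which contains no element of absolute length three. Once this is seen, the argument uses only parts~(1) and~(3) of Lemma~\ref{lem_4} and Theorem~\ref{thm_max_dih}.
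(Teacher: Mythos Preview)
Your proof is correct and follows essentially the same route as the paper's. The heart of both arguments is identical: rank-$2$ common upper bounds of two distinct atoms $t,t'$ lie in $W(t,t')$, and if there were two of them, Lemma~\ref{lem_4}(3) would force $w\in W(t,t')$, contradicting $\ell_T(w)=3$. The only difference is in the reduction step: the paper invokes the ``bowtie'' criterion of McCammond to isolate the atom-pair case, whereas you carry out this reduction by hand via self-duality and a short case analysis, making your argument slightly more self-contained.
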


\begin{proof}
The interval $[u,v]_T$ is isomorphic to $[1, u^{-1} v]_T$. We can thus assume that $u=1$, and $\ell_T(v)=3$. To conclude the proof, it suffices to show the following: let $t\neq t'\in T$, $w,w'\in [1,v]_T$ with $\ell_T(w)=\ell_T(w')=2$, $t, t'\leq_T w$, and $t, t'\leq_T w'$. Then $w=w'$. Indeed, since $\ell_T(v)=3$, any obstruction to the lattice property must come from a so-called "bowtie"~\cite[Definition 1.5]{McCammond}, that is in our setting, a pair $(t_1, t_2)$ of distinct elements of reflection length $1$ together with a pair $(w_1, w_2)$ of distinct elements of reflection length $2$, such that $t_i \leq_T w_j$, for all $i$ and $j$. 

Since $t, t'\leq_T w$, there are $t_1, t_2\in T$ such that $tt_1=t't_2=w$. The subgroup $W_w$ is the maximal dihedral reflection subgroup containing $t$ and $t_1$, and by Lemma~\ref{lem_4} we also have $t', t_2\in W_w$. Hence $W_w=W(t,t')$ since $W_w$ is a maximal dihedral reflection subgroup and contains both $t$ and $t'$. In particular we have $w\in W(t,t')$. Similarly, we have $w'\in W(t,t')$. 

 Now since $w,w'\leq_T v$, there are $r_1, r_2\in T$ such that $wr_1=w'r_2=v$. It implies that $w'^{-1}w=r_2 r_1\in W(t,t')$ since both $w$ and $w'$ lie in $W(t,t')$. If $w\neq w'$, then $r_2\neq r_1$, hence $\ell_T(w'^{-1}w)=2$. By Lemma~\ref{lem_4}~(3) we deduce that $r_1, r_2\in W(t,t')$.  
We thus have that $v=wr_1\in W(t,t')$, a contradiction: since $W(t,t')$ is a dihedral reflection subgroup of $W$, every $x\in W(t,t')$ can be written as a product of at most two reflections of $W(t,t')$ (hence of $W$), hence satisfies $\ell_T(x)\leq 2$, while $\ell_T(v)=3$. Thus $w=w'$, which concludes the proof.  
\end{proof}

We deduce the following corollary. 

\begin{cor}
Let $(W,S)$ be a Coxeter system and $w\in W$ such that $\ell_T(w)=3$. Then the interval group $G([1,w]_T)$ is a quasi-Garside group. 
\end{cor}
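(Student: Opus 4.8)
The plan is to deduce this corollary directly from Theorem~\ref{thm:main} together with the standard machinery of interval Garside structures. First I would recall the relevant definitions: given a bounded graded poset $P$ that is a lattice, the \emph{interval group} $G(P)$ is the group generated by the atoms of $P$ subject to the relations coming from the factorizations of elements of $P$ (equivalently, for $a,b \in P$ with $a \leq b$, the relation that any two maximal chains from $a$ to $b$ give the same product of atoms). A fundamental result in the theory (see~\cite{Neaime} and references therein) states that if $P$ is a lattice which is moreover \emph{balanced} in the appropriate sense — every atom lies below the maximum, and the left and right divisibility orders induced on $G(P)$ by the image of $P$ coincide with $P$ — then $G(P)$ is a quasi-Garside group, with Garside family the image of $P$.

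The key point is that the interval $[1,w]_T$ in the absolute order satisfies exactly the hypotheses needed to invoke this result. First, $[1,w]_T$ is a lattice: this is precisely the content of Theorem~\ref{thm:main}, since $\ell_T(w)=3$ means $[1,w]_T$ has length $3$. Second, $[1,w]_T$ is graded by $\ell_T$ (restricted to the interval), with minimum $1$ and maximum $w$; this is a standard property of intervals in the absolute order, following from the fact that $\ell_T$ is a well-defined length function and that the Hurwitz action on reduced $T$-words is transitive on each interval (or more elementarily, that any $T$-reduced word for an element below $w$ extends to one for $w$). Third, the interval is \emph{balanced}: for $u \leq_T v$ in $[1,w]_T$ one has $u \leq_T v$ if and only if $\ell_T(u) + \ell_T(u^{-1}v) = \ell_T(v)$ if and only if $\ell_T(vu^{-1}) + \ell_T(u) = \ell_T(v)$ (using that $\ell_T$ is invariant under inversion and conjugation), so the left and right orders agree. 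These three facts are exactly the ingredients in the definition of an interval group arising from a \emph{lattice interval in a Garside-like poset}, and the cited theory then yields that $G([1,w]_T)$ is a quasi-Garside group.

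Concretely, the steps in order are: (1) state the criterion from the interval-Garside literature that a bounded graded balanced lattice gives rise to a quasi-Garside interval group; (2) verify $[1,w]_T$ is bounded and graded by $\ell_T$ — cite the relevant basic properties of the absolute order; (3) verify it is balanced, using invariance of $\ell_T$ under inversion; (4) invoke Theorem~\ref{thm:main} for the lattice property; (5) conclude. I expect the main (and only real) obstacle to be bookkeeping: making sure the precise form of the "interval $\Rightarrow$ quasi-Garside" statement being cited matches the hypotheses one can actually verify here, since different sources phrase the balancedness and gradedness conditions slightly differently. Since the paper has already set up $[1,w]_T$ and cited~\cite{Neaime, Dual, Garside} for exactly this circle of ideas, the cleanest write-up is to say that all the hypotheses of the interval-Garside construction hold — boundedness, gradedness, and balancedness being immediate from the definition of $\leq_T$ and the properties of $\ell_T$, and the lattice property being Theorem~\ref{thm:main} — and hence $G([1,w]_T)$ is quasi-Garside.

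\begin{proof}
By Theorem~\ref{thm:main}, the interval $[1,w]_T$ is a lattice. It is moreover bounded, with minimum $1$ and maximum $w$, and graded by the restriction of $\ell_T$, which takes values in $\{0,1,2,3\}$ on this interval. Finally, since $\ell_T$ is invariant under inversion and under conjugation, for $u\leq_T v$ in $[1,w]_T$ the condition $\ell_T(u)+\ell_T(u^{-1}v)=\ell_T(v)$ is equivalent to $\ell_T(vu^{-1})+\ell_T(u)=\ell_T(v)$, so the left and right divisibility orders on the associated interval group coincide with $\leq_T$; that is, $[1,w]_T$ is balanced. A bounded, graded, balanced lattice gives rise to an interval group which is a quasi-Garside group, with Garside family the image of the interval (see~\cite[Section 2.2]{Neaime} and the references therein, as well as~\cite{Dual, Garside}). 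Applying this to $[1,w]_T$ yields that $G([1,w]_T)$ is a quasi-Garside group.
\end{proof}
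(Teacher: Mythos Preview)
Your proposal is correct and follows essentially the same approach as the paper: invoke Theorem~\ref{thm:main} for the lattice property and then appeal to the standard interval--Garside machinery. The paper's own proof is even terser, citing directly \cite[Theorem~5.4]{Dig1} and \cite[Theorem~2.8]{Neaime} without spelling out the boundedness, gradedness, or balancedness hypotheses; your explicit verification of balancedness via conjugation-invariance of $\ell_T$ is a correct elaboration of what those cited theorems require.
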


\begin{proof} This is an immediate consequence of the lattice property of $[1,w]_T$ and of~\cite[Theorem 5.4]{Dig1} (see also~\cite[Theorem 2.8]{Neaime}). 
\end{proof}

\begin{rmq}
For some particular finite Coxeter groups $W$ (like $H_3$) and some particular $w\in W$ with $\ell_T(w)=3$ called (proper) quasi-Coxeter elements, the lattice property of $[1,w]_T$ was established by computer on a case-by-case basis in~\cite{BN2}, where the corresponding interval groups are also studied.  
\end{rmq}

\textbf{Acknowledgements}. I am extremely grateful to Jean-Yves Hée, who suggested Propositions 1.3 and 1.4 to me. The proof of Proposition 1.4 is his proof, while his original proof of Proposition 1.3 was using root systems. I also thank him for his careful reading of a preliminary version of this article and several insightful comments and suggestions which led to improvements in the exposition, especially for his suggestion to present my proof of Proposition 1.3 using Lemma 1.2.

\end{document}